\documentclass{amsart}

\usepackage{nicefrac}
\usepackage{slashbox}
\usepackage{float}

\usepackage{amssymb}
\usepackage{graphicx} 
\usepackage{tikz}
\tikzset{>=latex}
\usetikzlibrary{arrows,backgrounds,decorations,automata}

\usepackage{subfig}

\newtheorem{theorem}{Theorem}[section]
\newtheorem{lemma}[theorem]{Lemma}

\newtheorem{prop}[theorem]{Proposition}
\newtheorem{conj}{Conjecture}

\theoremstyle{definition}

\theoremstyle{remark}
\newtheorem{remark}[theorem]{Remark}

\newcommand{\diag}{I}

\numberwithin{equation}{section}

\newcommand{\Ref}[1]{(\ref{#1})}

\newcommand{\abar}{a^{-1}}
\newcommand{\bbar}{b^{-1}}
\newcommand{\qbar}{q^{-1}}
\newcommand{\suba}[1]{\langle a^{#1} \rangle}

\newcommand{\Triv}{Triv}
\newcommand{\cogrowthF}{MR3043436}

\newcommand{\LS}{MR0577064}

\newcommand{\Cohen}{Cohen}
\newcommand{\DykemaBS}{DykemaBS}
\newcommand{\Flajolet}{Flajolet}

\newcommand{\Grig}{Grig}
\newcommand{\KouksovFreeProd}{MR1689726}

\newcommand{\KouksovRational}{MR1487319}

\newcommand{\GStald}{MR2415304}

\newcommand{\Mann}{MR2894945}
\newcommand{\Wagon}{MR1251963}

\newcommand{\LipsDiag}{MR929767}
\newcommand{\Stanley}{stanley1980}
\newcommand{\StanleyBookTwo}{MR1676282}
\newcommand{\ZW}{MR808671}
\newcommand{\KW}{MR689038}

\newcommand{\Kauers}{kauers}
\newcommand{\KauersGuess}{kauersguess}

\newcommand{\Hammersley}{hammersley1982}



\newcommand{\Woess}{MR731608}

\newcommand{\HumphriesA}{MR1466893}
\newcommand{\HumphriesB}{MR1426519}
\newcommand{\Poole}{MR0111886}
    
\textwidth=125 mm  
\textheight=195 mm

\begin{document}

\title{The cogrowth series for  $\mathrm{BS}(N,N)$ is D-finite}

\author[M. Elder]{M. Elder}
\address{School of Mathematical \& Physical Sciences, The~University~of~Newcastle, Callaghan, New South Wales, Australia}
\email{murray.elder@newcastle.edu.au}
\author[A. Rechnitzer]{A. Rechnitzer}
\address{Department of Mathematics, University of British Columbia, Vancouver, British Columbia, Canada}
\email{andrewr@math.ubc.ca}
\author[E.J.~Janse~van~Rensburg]{E.J.~Janse~van~Rensburg}
\address{York University, Toronto, Ontario, Canada}
\email{rensburg@mathstat.yorku.ca}
\author[T. Wong]{T. Wong}
\address{Department of Mathematics, University of British Columbia, Vancouver, British Columbia, Canada}
\email{twong@math.ubc.ca}

\subjclass[2010]{20F69,	20F65,  05A15}

\keywords{Cogrowth series; 
Baumslag-Solitar group; D-finite generating function; algebraic generating function; amenable group.}

\date{Semptember 2013}

\begin{abstract}
We compute the cogrowth series for
Baumslag-Solitar groups $\mathrm{BS}(N,N) = \langle a,b | a^N b = b a^N \rangle$, which we show to be D-finite. It follows that 
 their cogrowth rates are algebraic numbers.

\end{abstract}

\maketitle

\section{Introduction}

The function $c:\mathbb N\rightarrow \mathbb N$ where $c(n)$ is the number of
words of length $n$  in the generators and inverses of generators of a finitely generated group that
represent the identity element is called the {\em cogrowth function} and the
corresponding generating function is called the \emph{cogrowth series}. The
rate of exponential growth of the cogrowth function  $\limsup c(n)^{1/n}$ is the {\em cogrowth}
of the group (with respect to a chosen finite  generating set).  Note that cogrowth can also be defined by counting only {\em freely reduced} trivial words --- see Remark \ref{rem:freered} at the end of this section.

In this article we study the cogrowth of the groups $\mathrm {BS}(N,M)$ with presentation
\[ \mathrm{BS}(N,M) = \langle a,b  \ | \  a^Nb= b a^M  \rangle \] 
for positive integers $N,M$.  We prove in Theorem \ref{thm:main} that for groups $\mathrm{BS}(N,N)$ the cogrowth series is D-finite, that is, satisfies a linear differential equation with polynomial coefficients.


The class of D-finite  (or holonomic) functions includes rational and algebraic functions, 
 and many of the most famous functions in mathematics and physics. See 
\cite{\StanleyBookTwo, \Stanley} for background on D-finite generating 
functions. If $\{a_n\}$ is a sequence and $A(z)=\sum_n a_nz^n$ is its 
corresponding generating function then $A(z)$ is D-finite if and only if 
$\{a_n\}$ is P-recursive (satisfies a linear recurrence with polynomial 
coefficients). D-finite functions are closed under addition and multiplication, 
and  the composition of D-finite function with an algebraic function is 
D-finite \cite{\Stanley}. Further, if a generating function is D-finite and the 
differential equation is known, then the coefficients of corresponding sequence 
can be computed quickly and their asymptotics are readily computed (see for 
example \cite{\ZW}).

The class of group presentations for which the cogrowth series has been computed explicitly (in terms of a closed-form expression, or system of simple recurrences, for example) is limited. Kouksov proved that the cogrowth series is a rational function if and only if the group is finite \cite{\KouksovRational}, and  computed closed-form expressions for some free products of finite groups and free groups \cite{\KouksovFreeProd} which are algebraic functions.  Humphries gave recursions and closed-form functions  for various abelian groups \cite{\HumphriesA,\HumphriesB}. 

We note that Dykema and Redelmeier have also tried to compute cogrowth for general Baumslag-Solitar groups \cite{\DykemaBS}, and the problem appears to be a difficult one.

 Grigorchuk and
independently Cohen \cite{\Cohen, \Grig} proved that a finitely generated group
is  amenable if and only if its cogrowth rate is twice the number of generators. 
For more background on amenability and  cogrowth see \cite{\Mann, \Wagon}.
The free group on two (or more) letters is
known to be non-amenable, and  subgroups of amenable groups are also amenable.
It follows that if a group contains a subgroup isomorphic to the free
group on two generators, then it cannot be amenable.  $\mathbb{Z}^2 \cong \mathrm{BS}(1,1)$ is amenable, while 
for $N>1$ the subgroup of 
 $\mathrm{BS}(N,N)$ generated by $at$ and $at^{-1}$ is free, so these groups are non-amenable. 
 We compute  cogrowth rates for $\mathrm{BS}(N,N)$ for $N\leq 10$ (see  Table \ref{tab_mu_NN}), and observe that the rate appears to converge to that of the free group of rank 2.  This is in line with the result of Guyot and Stalder  \cite{\GStald} that the  the limit (in the space of   {\em marked groups}) of  $\mathrm{BS}(N,M)$ as $N,M\rightarrow\infty$ is the free group of rank 2.

In \cite{\cogrowthF} a numerical method was used to find bounds for the cogrowth 
of groups, and a lower bound  
 for the cogrowth rate of 
$BS(N,M)$ for small values of $N,M$ were computed. A significant improvement of this numerical work has been 
undertaken by the authors, which can be found in \cite{\Triv}. 

The article is organised as follows. In Section \ref{sec:ZZ}
we briefly explain the well-known result that the cogrowth for $\mathrm{BS}(1,1)\cong \mathbb Z^2$ is D-finite and not algebraic.
 In Section \ref{sec:exact} we introduce some two variable generating functions that count various words in Baumslag-Solitar groups $\mathrm{BS}(N,M)$, and give a system of equations that they satisfy in Proposition \ref{prop:GLKeqns}.
In Section \ref{sec:NN} we restrict  these equations to the case when $N=M$,  
and prove that the cogrowth series for $\mathrm{BS}(N,N)$ is D-finite. In 
Section \ref{sec:rates} we
compute precise numerical values for the cogrowth rates, and in Section \ref{sec:numerical} we discuss computational work to find explicit formulae for the differential equations proved above, and conjecture an
asymptotic form for the cogrowth series.

\begin{remark}\label{rem:freered}
We note that the cogrowth of a group is often defined in terms of freely 
reduced words. Let $d:\mathbb N\rightarrow \mathbb N$ where $d(n)$ is the 
number of freely reduced words of length $n$. The associated generating 
functions of $c(n)$ and $d(n)$ are related via the following algebraic 
substitutions.
\begin{lemma}[Lemma 1 of \cite{\Woess}; \cite{\KouksovRational}]
\label{lemma woe}
  Let $C(z) = \sum c(n) z^n$ and $D(z)=\sum d(n) z^n$ be the generating 
functions associated to $c(n)$ and $d(n)$, and assume the group in question is generated by $p$ elements and their inverses. 
Then
\begin{align}
D(z) &= \frac{1-z^2}{1+(2p-1)z^2} C\left(\frac{z}{1+(2p-1)z^2} \right) \\
C(z) &= \frac{1-p+p\sqrt{1-4(2p-1)z^2}}{1-4p^2z^2} D\left( \frac{1-\sqrt{1-4(2p-1)z^2}}{2(2p-1)z} 
\right)
\end{align}
\end{lemma}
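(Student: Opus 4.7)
The plan is to prove the second identity by a bijective decomposition on the Cayley graph of the free group and then invert it to obtain the first. The starting point is that every word $w$ of length $n$ representing the identity has a unique freely reduced form $w'$, which also represents the identity. Classifying words $w$ by their reduced form gives
\[
c(n) = \sum_{k=0}^n d(k)\,\beta(k,n),
\]
where $\beta(k,n)$ is the number of length-$n$ words in the free group that freely reduce to any fixed freely reduced word of length $k$. The Cayley graph of the free group of rank $p$ is the $2p$-regular tree, so by symmetry $\beta(k,n)$ depends only on the distance $k$ from the root to the endpoint of $w'$, not on the specific $w'$.

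To compute $B_k(z) := \sum_n \beta(k,n) z^n$ I would decompose a walk of length $n$ from $v_0$ to $v_k$ along the geodesic $v_0, v_1, \dots, v_k$ according to the last-visit times $t_0 < t_1 < \cdots < t_k = n$. After time $t_i$ the walk never returns to $v_i$, so it is confined to the component of the tree minus $v_i$ containing $v_{i+1}$; in particular at time $t_i+1$ the walk is forced to be at $v_{i+1}$, the unique neighbor of $v_i$ in that component. Let $E(z)$ count closed walks at a vertex of the full $2p$-regular tree, and $\tilde E(z)$ count closed walks at a vertex with one forbidden incident edge. The segment $[0,t_0]$ is an arbitrary excursion at $v_0$ contributing $E(z)$, while each segment $[t_i+1,t_{i+1}]$ is a forward step followed by an excursion at $v_{i+1}$ avoiding $v_i$, contributing $z\tilde E(z)$. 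Hence
\[
B_k(z) = E(z)\,(z\tilde E(z))^k, \qquad C(z) = E(z)\, D\!\left(z\tilde E(z)\right).
\]

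Standard first-return decompositions give $\tilde E = 1 + (2p-1)z^2\tilde E^2$ and $E = 1/(1-2pz^2\tilde E)$. Solving the quadratic for $\tilde E$ yields
\[
z\tilde E(z) = \frac{1-\sqrt{1-4(2p-1)z^2}}{2(2p-1)z},
\]
and rationalising the denominator of $E(z)$ produces
\[
E(z) = \frac{1-p+p\sqrt{1-4(2p-1)z^2}}{1-4p^2z^2},
\]
which is precisely the second identity. For the first identity I would set $y=z\tilde E(z)$ and invert this substitution using the defining quadratic: writing $\tilde E = y/z$ in $\tilde E = 1+(2p-1)z^2\tilde E^2$ gives immediately $z = y/(1+(2p-1)y^2)$. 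Then $1/E(z) = 1-2pz^2\tilde E = 1-2pyz$ simplifies to $(1-y^2)/(1+(2p-1)y^2)$, so solving $C(z) = E(z) D(y)$ for $D(y)$ delivers the first identity.

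The main obstacle is the combinatorial step: one must verify carefully that the last-visit decomposition is genuinely bijective, in particular that at time $t_i+1$ the walk is forced to $v_{i+1}$ and that the individual excursion pieces can be chosen independently. Once this is established, the remaining work is routine algebraic manipulation of the two explicit radical expressions.
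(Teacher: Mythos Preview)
Your argument is correct and self-contained, but you should be aware that the paper does not actually prove this lemma: it is simply quoted as Lemma~1 of Woess (with a second citation to Kouksov) and used as a black box. So there is no ``paper's own proof'' to compare against here.

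That said, what you have written is essentially the standard proof one finds in the cited sources. The key structural observation --- that free reduction does not change the group element represented, so grouping words by their reduced form gives $c(n)=\sum_k d(k)\beta(k,n)$ with $\beta(k,n)$ a purely free-group quantity independent of the ambient group $G$ --- is exactly the right starting point. Your last-visit decomposition on the $2p$-regular tree is correct: the forced step to $v_{i+1}$ at time $t_i+1$ follows because the tree minus $v_i$ has a unique component containing $v_k$, and independence of the excursion pieces holds because in a tree any path from $v_{i+1}$ back to $v_j$ with $j\leq i$ must pass through $v_i$. The first-return recursions $\tilde E = 1+(2p-1)z^2\tilde E^2$ and $E = 1/(1-2pz^2\tilde E)$ are standard, and your algebra (including the rationalisation producing $E(z)$ and the inversion $y=z\tilde E$, $z=y/(1+(2p-1)y^2)$) checks out line by line.

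The only place to be slightly more careful in a write-up is the reconstruction direction of the bijection: given an arbitrary tuple consisting of a closed walk at $v_0$ followed by $k$ restricted excursions, one should verify that the concatenated walk really has its last visit to $v_i$ at the end of the $i$-th block. This follows immediately from the tree structure (later blocks are confined to components not containing $v_i$), but it is worth saying explicitly since you flagged it as the main obstacle.
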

In this work we show that $C(z)$ is D-finite for $BS(N,N)$. The closure 
properties of D-finite functions then imply that $D(z)$ is also D-finite. More 
precisely, D-finite functions are closed under multiplication and the result of 
composing a D-finite function with an algebraic function is also D-finite (see 
Theorems~2.3 and~2.7 in~\cite{\Stanley}). 
\end{remark}

\section{$\mathrm{BS}(1,1)$}
\label{sec:ZZ}
The contents of this section are well known, see for example sequence~A002894 
in~\cite{Sloane} or page~90 of \cite{\Flajolet}. The group $ 
\mathrm{BS}(1,1)=\langle a, b \ | \ ab=ba\rangle$ is simply the free abelian 
group $\mathbb Z^2$ of rank~2. The {\em Cayley graph} is just the square grid, 
and  trivial words correspond to  closed  paths of even length starting and 
ending at the origin.

Now rotate the grid $45^\circ$ and rescale by $\sqrt 2$ --- see Figure \ref{fig:ZZ}.
Each step in a 
closed path changes the $x$-ordinate by $\pm1$. At the same time, each step 
changes the $y$-ordinate by $\pm1$, and these two processes are independent. In 
a path of $2n$-steps, $n$ steps must increase the $x$-ordinate and $n$ must 
decrease it and so giving $\binom{2n}{n}$ possibilities. The same occurs 
independently for the $y$-ordinates and so we get another 
factor of $\binom{2n}{n}$. Hence the total number of  possible trivial words of length $2n$ is 
$\binom{2n}{n}^2$.

\begin{center}
\begin{figure}[h]
\begin{tikzpicture}[scale=.8]


   \node (a) at (1,4) {$\bullet$};

   \node (aa) at (1.2,4.8) {$a$};

      \draw[->] (0,3)--(1.6,4.6);

      \draw (1,4)--(3,6);
      \draw (3,6)--(4,7);
  
      \draw  (3,6)--(2,7);
      \draw (2,7)--(3,6);

   \node (bb) at (4.8,6.8) {$b$};

    \draw[->] (4,7)--(4.6,6.4);
    \draw (4,7)--(5,6);

      \draw (5,6)--(4,5);
  \draw (4,5)--(5,6);
      \draw  (5,6)--(6,5);
      \draw (6,5)--(5,4);
      \draw (5,4)--(6,3);
      \draw (6,3)--(5,2);
      \draw  (5,2)--(4,1);
      \draw (4,1)--(3,2);
      \draw (3,2)--(2,1);
      \draw (2,1)--(1,2);
      \draw (1,2)--(0,3);

  \draw[dotted] (1,4)--(3,2);
  \draw[dotted] (2,5)--(5,2);
  \draw[dotted] (3,6)--(5,4);

  \draw[dotted] (1,2)--(4,5);
  \draw[dotted] (3,2)--(5,4);
      \end{tikzpicture}
 \caption{A trivial word $aa\bbar bab\abar ab\abar b \abar\abar\bbar\abar\bbar\bbar a$
in the Cayley graph of $\langle a, b \ | \ ab=ba\rangle$.}

   \label{fig:ZZ}
\end{figure}
\end{center}

Let $c_{2n}=\binom{2n}{n}^2$ and $c_{2n+1}=0$, and notice that $(n+1)^2c_{2n+2}=4(2n+1)^2c_{2n}$.  Then the sequence 
 $\{c_n\}$ satisfies the polynomial recurrence  
\[ (\nicefrac{n}{2}+1)^2c_{n+2}=4(n+1)^2c_n\] and is therefore {\em P-recursive}, which implies that the corresponding cogrowth series  satisfies a linear differential equation with
polynomial coefficients (Theorem 1.5  \cite{\Stanley}), that is, the cogrowth series is  {\em D-finite}.

One can  show that the cogrowth series is not algebraic by considering its
asymptotics. In particular, the coefficients of an algebraic function must grow 
as $A \mu^n n^\gamma$ where $\mu$ is an algebraic number, and $\gamma$ 
belongs to the set $\mathbb{Q}\setminus\{-1,-2,-3,\dots\}$ (see Theorem~D from 
\cite{flajolet1987}). An application of Stirling's formula shows that 
\begin{align}
c_n 
&\sim 4^n \cdot \frac{2}{\pi n},
\end{align}
and so the factor of $n^{-1}$ implies the corresponding generating function is 
not algebraic.

\section{Series for general Baumslag-Solitar groups}
\label{sec:exact}
Let us fix the following notation. If words $u,v$ are identical as strings we 
write $u\equiv v$, and if they represent the same group element we write $u=v$.
If a word $w$ represents an element in a subgroup $U$, we write $w \in U$.

Consider the  Baumslag-Solitar group \[ \mathrm{BS}(N,M) = \langle a,b  \ | \ a^Nb= b a^M  \rangle \] 
with $N,M$ positive integers.

Any word in $\{a^{\pm1}, b^{\pm1}\}^*$ can be transformed into a {\em normal form}  (\cite{\LS}  p.181) for
the corresponding group element by ``pushing'' each $a$ and $\abar$ in the
word as far to the right as possible using the identities 
\begin{align*}
a^{\pm 1}a^{\mp 1}&=1,  & b^{\pm 1}b^{\mp 1}&=1,\\
a^{\pm N} b &= b a^{\pm M} &  a^{\pm M} \bbar &= \bbar a^{\pm N},\\ 
a^{-i}b&=a^{N-i}ba^{-M} &  a^{-j}\bbar&= a^{M-j} \bbar a^{-N} 
\end{align*}
where $0<i<N $ and $ 0<j<M$, so that only positive powers of $a$ appear before a 
$b^{\pm 1}$ letter.
 The resulting normal form can be written as $P a^k$, where  $P$ is a freely reduced word in the alphabet $\{b, ab, \dots a^{N-1}b,
\bbar, a\bbar, \dots a^{M-1}\bbar \}$. Call $P$ the {\em prefix} and $k$ the {\em $a$-exponent} of the normal form word $Pa^k$.

Recall 
 Britton's lemma (\cite{\LS} p.181)  which in the case of $\mathrm{BS}(N,M)$ states that if  a trivial word $w\in\{a^{\pm1}, b^{\pm1}\}^*$ is freely reduced  and contains a $b^{\pm 1}$ letter, then $w$ must contain a subword of the form $ba^{lN}\bbar$ or $\bbar a^{lM} b$ for some $l\in\mathbb Z$.

Let $\mathcal{H}$ be the set of words  in  $\{a^{\pm1}, b^{\pm1}\}^*$ that 
represent elements in  $\suba{}$. Define $g_{n,k}$ to be the number of words 
$w\in \mathcal{H}$ of length $n$ having normal form with $a$-exponent $k$.

The cogrowth function for  $\mathrm{BS}(N,M)$ can be obtained from $g_{n,k}$ by setting $k=0$, as the next lemma shows. The reason for considering a more general function is that  we found  the corresponding two-variable generating function  much easier to work with than the cogrowth series directly.

\begin{lemma}\label{lem:zeroterms}  $g_{n,0}=c(n)$ where $c:\mathbb N\rightarrow \mathbb N$ is the cogrowth function for $\mathrm{BS}(N,M)$.  \end{lemma}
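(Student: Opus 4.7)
The plan is to deduce the lemma from the uniqueness of the normal form $Pa^k$ for elements of $\mathrm{BS}(N,M)$, which is a standard consequence of Britton's lemma for HNN extensions.

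First I observe that for any integer $m$, the word $a^m$ is itself already in the prescribed normal form, with $P$ the empty word and $a$-exponent $k=m$. (The empty word trivially satisfies the freely reduced prefix condition, and no $b^{\pm 1}$ letter is present to test the constraint on $a$-exponents in $P$.) Consequently, for any $w \in \mathcal{H}$ representing some element $a^m \in \suba{}$, uniqueness of normal forms forces the normal form of $w$ to coincide with the string $a^m$; its prefix $P$ is empty and its $a$-exponent is exactly $m$.

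From this, the lemma can be read off directly: a word $w$ of length $n$ is counted by $g_{n,0}$ precisely when $w\in \mathcal H$ and the normal form of $w$ has $a$-exponent $0$, which by the preceding paragraph is equivalent to $w$ representing $a^0 = 1$. Conversely every word of length $n$ representing the identity lies in $\mathcal H$ (since $1 \in \suba{}$) and has normal form $a^0$, so contributes to $g_{n,0}$. Hence $g_{n,0} = c(n)$.

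The only potentially non-trivial ingredient is the uniqueness of normal forms, which I would cite from \cite{\LS}. Alternatively, if $Pa^k$ and $P'a^{k'}$ represented the same group element, one could freely reduce $(Pa^k)(P'a^{k'})^{-1}$ and apply Britton's lemma to produce a pinch of the form $ba^{lN}\bbar$ or $\bbar a^{lM}b$, contradicting the convention that only powers $a^i$ with $0\le i<N$ (respectively $0\le i<M$) precede a $b^{\pm 1}$ letter in the prefix alphabet. I do not expect any substantial obstacle.
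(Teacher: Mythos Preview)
Your proposal is correct and follows essentially the same route as the paper: both arguments use Britton's lemma (the paper directly, you via the equivalent statement of normal-form uniqueness) to show that a word in $\mathcal H$ has $a$-exponent $0$ if and only if it represents the identity. Your write-up is in fact somewhat more explicit than the paper's one-line proof.
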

\begin{proof}
$g_{n,0}$ counts words $u\in \suba{}$ with normal form $Pa^0$, so $Pa^0u^{-1}=Pa^k=1$ for some $k$ so by Britton's Lemma 
$P$ must be the empty string.\end{proof}

\begin{lemma} \label{lem:gnkneg}
$g_{n,k} = g_{n,-k}$.\end{lemma}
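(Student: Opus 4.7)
The plan is to exhibit an explicit length-preserving bijection from the set of words counted by $g_{n,k}$ to the set of words counted by $g_{n,-k}$. The natural candidate is formal inversion: given $w \equiv x_1 x_2 \cdots x_n$ with each $x_i \in \{a^{\pm 1}, b^{\pm 1}\}$, send it to $w^{-1} \equiv x_n^{-1} x_{n-1}^{-1} \cdots x_1^{-1}$.

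Before applying this, I would make the following simple observation, in the spirit of Lemma~\ref{lem:zeroterms}. If $w \in \mathcal{H}$, then $w = a^j$ in $\mathrm{BS}(N,M)$ for some $j \in \mathbb{Z}$. Writing its normal form as $Pa^k$, we get $Pa^k = a^j$, hence $P = a^{j-k}$ in the group. But $P$ is a freely reduced word in the alphabet $\{b, ab, \ldots, a^{N-1}b, \bbar, a\bbar, \ldots, a^{M-1}\bbar\}$, every letter of which involves a $b^{\pm 1}$; by Britton's lemma such a $P$ can only equal a power of $a$ if $P$ is empty. Consequently, for $w \in \mathcal{H}$, the $a$-exponent $k$ of its normal form is precisely the integer such that $w = a^k$ in the group. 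Thus $g_{n,k}$ is the number of length-$n$ words in $\{a^{\pm 1}, b^{\pm 1}\}^*$ that represent $a^k$.

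Now the map $w \mapsto w^{-1}$ is evidently a length-preserving involution on $\{a^{\pm 1}, b^{\pm 1}\}^*$. If $w$ represents $a^k$, then $w^{-1}$ represents $a^{-k}$, which is still in $\suba{}$, so $w^{-1} \in \mathcal{H}$ as well. Hence the restriction of inversion to length-$n$ words representing $a^k$ is a bijection onto length-$n$ words representing $a^{-k}$, yielding $g_{n,k} = g_{n,-k}$.

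There is essentially no obstacle: the only point requiring a brief argument is the normal-form reduction in the first paragraph, which ensures that the ``$a$-exponent $k$'' in the definition of $g_{n,k}$ really does coincide with ``$w = a^k$ in the group.'' Once that is in hand, the inversion involution settles the lemma immediately.
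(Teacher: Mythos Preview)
Your proof is correct. Both you and the paper produce a length-preserving involution on $\{a^{\pm1},b^{\pm1}\}^*$ taking words representing $a^k$ to words representing $a^{-k}$, but the involutions differ: the paper uses the letter-substitution $a\leftrightarrow a^{-1}$ (with $b,b^{-1}$ fixed), which induces the group automorphism $a\mapsto a^{-1}$, $b\mapsto b$ of $\mathrm{BS}(N,M)$; you use formal inversion $w\mapsto w^{-1}$. Either map does the job equally well. Your extra paragraph verifying that the $a$-exponent of the normal form of $w\in\mathcal{H}$ coincides with the integer $k$ for which $w=a^k$ is a point the paper leaves implicit (its proof already speaks of ``words of length $n$ equal to $a^k$''), so making it explicit is a small bonus rather than a detour.
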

\begin{proof}
Exchange
each $a$ by $\abar$ and vice versa in all words of length $n$ equal to $a^k$ to obtain all words of length $n$ equal to $a^{-k}$.
\end{proof}

Define two subsets of $\mathcal{H}$ as follows.
\begin{itemize}
 \item Let $\mathcal{L}$ be the set of words  $w\in\mathcal{H}$ such that for any prefix $u$ of $w$, $u$ does not have normal form $\bbar
a^j$ for any integer $j$.

 \item Let $\mathcal{K}$ be the set of  words $w\in\mathcal{H}$ such that for any prefix $u$ of $w$, $u$ does not have normal form  $ba^j$ for any integer 
$j$.
\end{itemize}
Define $l_{n,j}, k_{n,j}$ to be the number of words of length $n$ and 
$a$-exponent $j$ in $\mathcal L, \mathcal K$ respectively. 

\begin{lemma}
\label{lem Lchar}
Let $w\in\mathcal H$. Then 
 $w\in \mathcal L$ if and only if $w\not\equiv 
x\bbar y$ with $x\in\suba{N}$. Similarly $w\in \mathcal K$ if and only if 
 $w\not\equiv xby$ with $x\in\suba{M}$.
\end{lemma}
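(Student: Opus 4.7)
The two equivalences are symmetric under swapping $b \leftrightarrow \bar b$ (and $N \leftrightarrow M$), so I treat only the $\mathcal L$ characterisation.

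For the ``if'' direction I argue the contrapositive. Assume $w \equiv x\bar b y$ as a string with $x \in \langle a^N\rangle$, and consider the prefix $u := x\bar b$ of $w$. Since $x$ already lies in $\langle a\rangle$, repeated application of the normal-form rewriting rule $a^{\pm M}\bar b = \bar b a^{\pm N}$ (together with free cancellation) pushes every $a^{\pm 1}$ of $x$ past the single $\bar b$; the hypothesis on $x$ provides the divisibility needed for the pushes to clear. The resulting normal form is $\bar b a^{j}$ for some $j\in\mathbb Z$, so $u$ is a prefix of $w$ with the forbidden normal form and $w\notin\mathcal L$.

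For the converse, let $u$ be a prefix of $w$ with normal form $\bar b a^{j}$. Then $u$ represents an element of the coset $\bar b\langle a\rangle$. I regard the sequence of prefixes of $u$ as a walk on the cosets of $\langle a\rangle$ in $BS(N,M)$, i.e.\ on the Bass--Serre tree; the walk starts at the vertex $\langle a\rangle$ and ends at the adjacent vertex $\bar b\langle a\rangle$. Let $x$ be the longest prefix of $u$ still lying in $\langle a\rangle$. By maximality the next letter moves the walk off $\langle a\rangle$, and since the walk does not return to $\langle a\rangle$ before terminating at the neighbour $\bar b\langle a\rangle$, the tree structure forces that letter to traverse the unique edge joining $\langle a\rangle$ and $\bar b\langle a\rangle$. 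Thus the next letter is $\bar b$, and the $a$-exponent of $x$ satisfies the congruence which, via the rewrite rule $a^{\pm M}\bar b = \bar b a^{\pm N}$, selects that specific $\bar b$-edge among all $\bar b$-neighbours of $\langle a\rangle$; this congruence is precisely $x\in\langle a^N\rangle$. Concatenating the remaining suffix of $w$ onto the tail of $u$ yields the decomposition $w\equiv x\bar b y$.

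The main obstacle is the tree-walk step in the converse: one must rule out both that the walk departs from $\langle a\rangle$ for the last time via a $b$-letter (which would trap it in a $b$-subtree from which $\bar b\langle a\rangle$ is unreachable without revisiting $\langle a\rangle$) and that it departs via a $\bar b$-letter with the wrong $a$-offset (which would land it in a twisted $\bar b$-branch disjoint from $\bar b\langle a\rangle$). Both exclusions follow formally from the Bass--Serre graph being a tree, together with the bookkeeping of $a$-exponents modulo the appropriate index already encoded in the explicit normal-form rewriting system recalled earlier in the section. The $\mathcal K$ case is proved verbatim after interchanging $b$ with $\bar b$ and $N$ with $M$.
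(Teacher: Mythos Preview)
Your forward direction coincides with the paper's: both exhibit the prefix $x\bar b$ and observe that its normal form is $\bar b a^j$. One small caution: $x$ is an arbitrary word representing an element of $\langle a\rangle$, not literally a string of $a^{\pm1}$'s, so ``pushing every $a^{\pm1}$ of $x$ past $\bar b$'' should be phrased as ``$x$ represents some $a^k$, hence $x\bar b$ represents $a^k\bar b$, whose normal form is $\ldots$''. (A side remark applying equally to the paper and to your write-up: the rule $a^{\pm M}\bar b=\bar b a^{\pm N}$ actually requires $x\in\langle a^M\rangle$, not $\langle a^N\rangle$, for the normal form to begin with $\bar b$; the $N$ in the lemma statement is evidently meant to be $M$, which is harmless since the only application in the paper has $N=M$.)

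For the converse you take a genuinely different route. The paper writes $w\equiv uv$ with $u$ representing $\bar b a^j$, puts the \emph{suffix} $v$ into normal form $Pa^k$, and applies Britton's lemma to the trivial word $\bar b a^j P a^{k-i}$: since $P$ is pinch-free, the only possible pinch involves the leading $\bar b$, forcing $P$ to start with $b$ and $j$ to lie in $M\mathbb Z$. You instead analyse the \emph{prefix} $u$ as a walk on the Bass--Serre tree of $\langle a\rangle$-cosets, take $x$ to be the last moment the walk sits at the base vertex, and use the tree property to force the next step onto the unique edge to $\bar b\langle a\rangle$. Your argument has the advantage of producing the desired string prefix $x$ followed by the letter $\bar b$ directly; the paper's Britton computation yields information about $j$ and about the normal form of $v$, and its final clause passing from this to a string factorisation of $w$ is quite compressed. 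The two arguments are really the same HNN structure viewed from opposite ends of $u$, but your Bass--Serre packaging makes the extraction of $x$ cleaner.
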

\begin{proof}
If $w\equiv 
x\bbar y$ with $x\in\suba{N}$, then $w=a^{iN}\bbar y=\bbar a^{iM}y$ so $w\not\in\mathcal L$.
Conversely if $w\not\in\mathcal L$ then $w\equiv uv$ with $u=\bbar
a^j$ for some $j$. Let $Pa^k$ be the normal form for $v$.
Since $w\in \mathcal H$ then $w=a^i$ for some $i$, and so $$1=wa^{-i}=uva^{-i}=\bbar a^j Pa^{k-i}.$$  Since $P$ contains no subwords of the form 
$ba^{lN}\bbar$ or $\bbar a^{lM} b$, 
Britton's lemma  implies that $j=lM$ and $P$ starts with a $b$, so $w\equiv xy$ with $x=\bbar a^{lM} b$.
The result for $\mathcal K$ follows by a similar argument.
\end{proof}

Define the following two-variable generating functions
\begin{align*}
  G(z;q) &= \sum_{n,j} g_{n,j} z^n q^j, \\
  L(z;q) &= \sum_{n,j} l_{n,j} z^n q^j, &
  K(z;q) &= \sum_{n,j} g_{n,j} z^n q^j.
\end{align*}
These are all formal power series in $z$ with coefficients that are Laurent 
polynomials in $q$. The functions $L$ and $K$ are related as follows.
\begin{lemma}
In any group $\mathrm{BS}(N,M)$ we have
 $L(z;1)=K(z;1)$.\end{lemma}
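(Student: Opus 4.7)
The plan is to prove $L(z;1) = K(z;1)$ indirectly by deriving identical closed-form expressions for the complementary series $G(z;1) - L(z;1)$ and $G(z;1) - K(z;1)$. We will count words in $\mathcal{H} \setminus \mathcal{L}$ (respectively $\mathcal{H}\setminus \mathcal{K}$) by a first-passage decomposition: each such word has a uniquely determined prefix $w_1\cdots w_t$ whose associated walk first visits the forbidden coset $\bbar \langle a \rangle$ (respectively $b \langle a \rangle$).

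For $\mathcal{H}\setminus\mathcal{L}$, that prefix necessarily ends in the letter $\bbar$; stripping it off leaves a word in $\mathcal{L}$ of length $t-1$ whose $a$-exponent is a multiple of $M$ (using the relation $a^{iM}\bbar = \bbar a^{iN}$). The remaining suffix is an arbitrary walk from $\bbar a^{iN}$ back to $\langle a\rangle$, and since $a^{-iN} b\langle a\rangle = b\langle a\rangle$, left-translation identifies such suffixes with unrestricted walks from $e$ to $b\langle a\rangle$, enumerated by some generating function $R(z)$. Setting $\tilde L^M(z) = \sum_{m,i} l_{m,iM}\, z^m$, this decomposition will give
\begin{equation*}
G(z;1) - L(z;1) \;=\; z\, \tilde L^M(z)\, R(z).
\end{equation*}
Applying the same first-passage idea to $R(z)$ itself---a walk from $e$ to $b\langle a\rangle$ first enters $b\langle a\rangle$ after a prefix in $\mathcal{K}$ ending at $a$-exponent divisible by $N$, followed by the letter $b$, followed by an unrestricted tail ending in $b\langle a\rangle$, which by an analogous translation is counted by $G(z;1)$---yields $R(z) = z\, \tilde K^N(z)\, G(z;1)$, where $\tilde K^N(z) = \sum_{m,j} k_{m,jN}\, z^m$. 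Combining the two identities gives
\begin{equation*}
G(z;1) - L(z;1) \;=\; z^2\, \tilde L^M(z)\, \tilde K^N(z)\, G(z;1),
\end{equation*}
and the symmetric argument (interchanging the roles of $\mathcal{L}$ and $\mathcal{K}$, of $\bbar\langle a\rangle$ and $b\langle a\rangle$, and of $N$ and $M$) produces $G(z;1) - K(z;1) = z^2\, \tilde K^N(z)\, \tilde L^M(z)\, G(z;1)$. Commutativity of multiplication in $\mathbb{Z}[[z]]$ forces the two right-hand sides to coincide, yielding $L(z;1)=K(z;1)$.

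The main thing to verify carefully is not an estimate but the bookkeeping of the decomposition: one needs to check that the first-visit splitting is genuinely bijective (the prefix is uniquely determined as an element of $\mathcal{L}$ or $\mathcal{K}$, and the tail is completely unconstrained), and that the translation arguments correctly identify the two classes of tail walks. The translations rest on the identities $a^{-iN}b\langle a\rangle = b\langle a\rangle$ and $a^{-iM}\bbar\langle a\rangle = \bbar\langle a\rangle$, which follow directly from the defining relation of $\mathrm{BS}(N,M)$. Once these are in place, the equality at $q=1$ comes down to the commutativity of the two auxiliary power series $\tilde L^M$ and $\tilde K^N$.
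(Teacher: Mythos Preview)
Your approach is correct and genuinely different from the paper's. The paper constructs a direct length-preserving bijection $\mathcal{H}\setminus\mathcal{L}\to\mathcal{H}\setminus\mathcal{K}$: writing $w\equiv x\,\bbar\,z\,b\,t$ with $x$ and $z$ chosen minimally (so that $x$ lies in the appropriate $\langle a^{\bullet}\rangle$ and $z$ in the other), it simply swaps the two blocks to produce $z\,b\,x\,\bbar\,t\in\mathcal{H}\setminus\mathcal{K}$ of the same length. You instead compute both complements as generating functions, obtaining the common closed form
\[
G(z;1)-L(z;1)\;=\;z^{2}\,\tilde L^{M}(z)\,\tilde K^{N}(z)\,G(z;1)\;=\;G(z;1)-K(z;1),
\]
via two nested first-passage decompositions, and conclude by commutativity in $\mathbb{Z}[[z]]$. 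The paper's argument is shorter and purely bijective; yours is more systematic within the generating-function framework and, as a by-product, gives an explicit product formula for the complement series that the paper does not record. At bottom the two proofs rest on the same factorisation $w\equiv u'\,\bbar\,v'\,b\,t'$, but organise it differently.

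One point deserves more care than your final paragraph suggests. The assertion that the first-passage prefix to $\bbar\langle a\rangle$ \emph{necessarily ends in $\bbar$} is not a mere bookkeeping matter: ruling out a final letter $a^{\pm 1}$ is immediate, but ruling out a final $b$ is not, since $u'=\bbar a^{j}\bbar$ (the group element forced by $u'b\in\bbar\langle a\rangle$) is itself outside $\bbar\langle a\rangle$ and so does not directly contradict minimality. The clean justification uses the Bass--Serre tree for the HNN extension: the cosets $\langle a\rangle$ and $\bbar\langle a\rangle$ are adjacent vertices, and since the graph is a tree the first crossing of that edge must go directly from $\langle a\rangle$ to $\bbar\langle a\rangle$; from an element $a^{k}\in\langle a\rangle$ one can reach $\bbar\langle a\rangle$ only by a $\bbar$ step (and only when $k\in M\mathbb{Z}$), never by a $b$ step (wrong $b$-exponent). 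With this in hand your bijections and translations go through exactly as you describe.
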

\begin{proof}
$L(z;1)=\sum_n\left(\sum_k l_{n,k}\right)z^n$, where the inner sum is the number of  words of length $n$ in $\mathcal L$. 
If $w\in\mathcal H\setminus \mathcal L$ then $w\equiv xb^{-1}y$ with $x\in\suba{N}$. Assume $x$ is chosen to be  of minimal length -- that is, 
if $x \equiv ub^{-1}v$ then $u$ is not in $\suba{N}$.
Since $w\in\mathcal H$ then $x\bbar y w^{-1}=a^{Ni}\bbar y a^i=1$ so by Britton's lemma we must have $y=zbt$ with $z\in\suba{M}$. Again choose $z$ to be minimal. 
Then  $zbxb^{-1}t\in\mathcal H\setminus \mathcal K$ has the same length as $w$. Since $x,z$ are uniquely determined,  this gives  a bijection between $\mathcal H\setminus \mathcal L$ and $\mathcal H\setminus \mathcal K$, and therefore between $\mathcal L$ and $\mathcal K$. 
\end{proof}

In the case that $N=M$ we can say even more.
\begin{lemma}\label{lem:LequalsK}
In  $\mathrm{BS}(N,N)$ we have $L(z;q)=K(z;q)$.\end{lemma}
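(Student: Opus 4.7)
The natural approach is to strengthen the bijective argument used to prove $L(z;1)=K(z;1)$ so that it refines over the $a$-exponent, not merely the length. The previous lemma sends $w \equiv xb^{-1}zbt \in \mathcal H\setminus\mathcal L$ (with $x \in \suba{N}$ and $z \in \suba{M}$ chosen minimally) to $w' \equiv zbxb^{-1}t \in \mathcal H\setminus\mathcal K$; this map is known to be a bijection and preserves word length. The plan is to show that when $N=M$ it also preserves the $a$-exponent $j$, which then gives $l_{n,j}=k_{n,j}$ for every $(n,j)$ via the identity $g_{n,j}-l_{n,j}=|\{w\in\mathcal H\setminus\mathcal L : |w|=n,\ a\text{-exp}=j\}|$ (and similarly for $\mathcal K$).

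To prove the $a$-exponent is preserved, I would argue that $w$ and $w'$ represent the same group element in $\mathrm{BS}(N,N)$. The defining relation $a^Nb=ba^N$ says exactly that $a^N$ commutes with $b$, so every element of $\suba{N}=\suba{M}$ commutes with $b^{\pm 1}$. Writing $x=a^{iN}$ and $z=a^{jN}$, we therefore have $b^{-1}zb=z$ and $bxb^{-1}=x$ as group elements, whence
\begin{align*}
w &= x\cdot(b^{-1}zb)\cdot t = xzt, \\
w' &= z\cdot(bxb^{-1})\cdot t = zxt = xzt.
\end{align*}
Thus $w=w'$ in $\mathrm{BS}(N,N)$, so they have identical normal forms and in particular identical $a$-exponents.

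Since the bijection preserves both $n$ and $j$, we conclude $l_{n,j}=k_{n,j}$ for all $n,j$, which is exactly the coefficient-wise equality $L(z;q)=K(z;q)$. The only real subtlety is confirming that the centrality of $\suba{N}$ with respect to $b$ in $\mathrm{BS}(N,N)$ is what makes the map exponent-preserving; for $N\neq M$ this fails because $b^{-1}a^{jM}b=a^{jN}$ instead of $a^{jM}$, so $w$ and $w'$ need no longer represent the same element and the refinement breaks down. Beyond this observation, the argument is essentially a reuse of the previous lemma with the group-theoretic identity above plugged in.
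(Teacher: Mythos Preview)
Your argument is correct, but it is not the paper's. The paper exhibits a direct bijection $\mathcal L \to \mathcal K$: apply the letter-swap $b\leftrightarrow b^{-1}$ to each word. In $\mathrm{BS}(N,N)$ the relation $a^Nb=ba^N$ is invariant under this swap, so it induces an automorphism of the group fixing $a$; consequently the image word still lies in $\mathcal H$, has the same length, has the same $a$-exponent, and a prefix of the image has normal form $ba^j$ precisely when the corresponding prefix of the original has normal form $b^{-1}a^j$. Hence the image lies in $\mathcal K$.

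Your route instead keeps the bijection $\mathcal H\setminus\mathcal L \to \mathcal H\setminus\mathcal K$ from the preceding lemma and upgrades it by observing that, when $N=M$, the subgroup $\langle a^N\rangle$ is central, so $w=xb^{-1}zbt$ and $w'=zbxb^{-1}t$ represent the same group element and therefore have equal $a$-exponent; subtracting from $g_{n,j}$ gives $l_{n,j}=k_{n,j}$. This is a valid and pleasant reuse of existing machinery. The paper's approach is a touch more direct (no passage through complements), and makes the failure for $N\neq M$ equally visible: the swap $b\leftrightarrow b^{-1}$ then carries the relation of $\mathrm{BS}(N,M)$ to that of $\mathrm{BS}(M,N)$, so it is no longer an automorphism and the image of a word in $\mathcal L$ need not even lie in $\mathcal H$.
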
\begin{proof}
If $w\in \mathcal L$ has length $n$ then replacing $b$ by $\bbar$ and vice versa, we obtain a word which has a prefix equal to $ba^j$ and of length $n$, and   represents the same power of $a$ as $w$, so is in  $\mathcal K$. 
\end{proof}
Note that if $N\neq M$ then the above proof breaks down --- replacing $b$ by 
$\bbar$ in $u=ba^N\bbar\in\mathcal L$ gives a word not in $\suba{}$, so the 
resulting word is not in $\mathcal K$.

Next for $d,e\in\mathbb{N}$ define an operator $\Phi_{d,e}$ which acts on 
Laurent polynomials in $q$ by
\begin{align*}
 \Phi_{d,e}\circ \left(Aq^k\right) &=
\begin{cases}
Aq^{ej} & \mathrm{if} \ k=dj\\
0 & \mathrm{otherwise}.
\end{cases}
\end{align*}
For example, $\Phi_{2,3}(4zq+5zq^4)= 5zq^6$. We then extend this operator to 
power series in $z$ with coefficients that are Laurent polynomials in $q$, in 
the obvious way
\begin{align*}
\Phi_{d,e} \circ \left(\sum_n z^n \sum_{k} c_{n,k} q^k \right)
  &= \sum_n z^n \sum_j c_{n,dj} q^{ej}.
\end{align*}
Note that  $\Phi_{N,N}$ simply deletes all terms except those of the form $cz^iq^{jN}$.

With these definitions we can write down a set of equations satisfied by the
functions $G, L$ and $K$.
\begin{prop}
\label{prop:GLKeqns}
The generating functions $G \equiv G(z;q), L \equiv L(z;q)$ and $K \equiv K(z;q)$ satisfy the following system of equations.
\begin{align*}
 L &=  1 + z(q+\qbar)L + z^2 L\cdot
  \left[  \Phi_{N,M}\circ L + \Phi_{M,N} \circ K \right] 
   - z^2 \left[ \Phi_{M,N} \circ K \right] \cdot
  \left[ \Phi_{N,N} \circ L \right], \\
  K & = 1 + z(q+\qbar)K + z^2 K \cdot
  \left[ \Phi_{M,N} \circ K + \Phi_{N,M} \circ L \right]
  - z^2 \left[ \Phi_{N,M} \circ L \right] \cdot 
  \left[ \Phi_{M,M} \circ K \right],
\intertext{and}
  G &= 1 + z(q+\qbar)G 
  + z^2 \left[ \Phi_{N,M} \circ L +  \Phi_{M,N} \circ K \right]G.
\end{align*}
\end{prop}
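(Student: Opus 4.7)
The plan is to derive each equation by a \emph{last-return} decomposition of the words being counted. For nonempty $w\in\mathcal{H}$, let $i$ be the largest index strictly less than $|w|$ for which the prefix $w[0{:}i]$ represents an element of $\suba{}$; such $i$ exists because the empty prefix works. Setting $\gamma=w[0{:}i]$ and $\eta=w[i{:}|w|]$, both $\gamma$ and $\eta$ lie in $\suba{}$, and $\eta$ has no proper non-empty prefix in $\suba{}$. Britton's lemma then forces $\eta$ to take one of three shapes: (a) $\eta\in\{a,\abar\}$; (b) $\eta=b\alpha\bbar$ with $\alpha$ of appropriate $a$-exponent so that the pinch reduces to an element of $\suba{}$; or (c) $\eta=\bbar\beta b$ similarly. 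These three cases correspond exactly to the three ``new'' pieces on the right-hand side of each equation, with the operators $\Phi_{d,e}$ encoding the shift of the $a$-exponent under $b$-conjugation that is forced by the relation $a^Nb=ba^M$ (i.e. the length-$2$ weighting $z^2$ together with the multiplicative $q$-factor change).

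Next, I identify the admissible classes of $\alpha$, $\beta$ and $\gamma$. The condition ``no proper non-empty prefix of $\eta$ lies in $\suba{}$'' translates directly: for $\eta=b\alpha\bbar$ it becomes ``no prefix of $\alpha$ lies in $\bbar\suba{}$'', which is exactly the defining property of $\mathcal{L}$; symmetrically, for $\eta=\bbar\beta b$ it says $\beta\in\mathcal{K}$. This is the reason $L$ and $K$ -- rather than $G$ -- appear inside the $\Phi$'s. The piece $\gamma$ automatically inherits whichever coset-avoidance condition defines the enclosing class, since every prefix of $\gamma$ is also a prefix of $w$. For $G$ there is no such coset to avoid, and the third equation $G=1+z(q+\qbar)G+z^2[\Phi_{N,M}L+\Phi_{M,N}K]G$ drops out of the decomposition at once.

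The subtle step, and the source of the subtraction terms in the $L$ and $K$ equations, comes from the single prefix of $w$ of length $|\gamma|+1$ that straddles $\gamma$ and $\eta$, namely $\gamma b$ or $\gamma\bbar$. A direct computation with the HNN relation shows that this straddling prefix can land in the forbidden coset only in one sub-case: for $\mathcal{L}$-enumeration with $\eta=\bbar\beta b$, the prefix $\gamma\bbar$ lies in $\bbar\suba{}$ precisely when the $a$-exponent of $\gamma$ satisfies the divisibility condition captured by $\Phi_{N,N}$. Excluding these $\gamma$'s replaces the $L$-factor by $L-\Phi_{N,N}L$ in the product with the excursion generating function $z^2\Phi_{M,N}K$, producing the correction $-z^2[\Phi_{M,N}K][\Phi_{N,N}L]$; the mirror analysis yields the subtraction in the $K$-equation. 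In the two remaining sub-cases the analogous bad prefix would place a $\gamma\in\suba{}$ at Bass--Serre level $\pm 2$, which is impossible, so no further correction appears. I expect the main technical obstacle to be the accompanying verification that no \emph{longer} prefix of $w$, of the form $\gamma b^{\pm 1}\cdot(\text{initial segment of }\alpha\text{ or }\beta)$, can lie in the forbidden coset once the one-letter straddling case has been handled; this should follow from a normal-form argument combining the already-excluded value of $\gamma$ with the memberships $\alpha\in\mathcal{L}$ or $\beta\in\mathcal{K}$.
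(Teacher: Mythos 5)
Your decomposition by the longest proper prefix lying in $\suba{}$ is essentially the paper's own argument: the paper factors $L$ and $K$ by the last return to $\suba{}$ and obtains the subtraction term from exactly your observation that the straddling prefix $\gamma\bbar$ has normal form $\bbar a^j$ precisely when $\gamma\in\suba{N}$ (it uses a first-letter factorisation for $G$ instead of a last-return one, but that is immaterial). The verification you defer --- that no longer prefix $\gamma b^{\pm 1}p$ lands in the forbidden coset --- is supplied by Lemma~\ref{lem Lchar}: such a prefix would force a prefix of $\alpha$ (resp.\ $\beta$) to have normal form $\bbar a^j$ (resp.\ $ba^j$), contradicting $\alpha\in\mathcal{L}$ (resp.\ $\beta\in\mathcal{K}$), which is exactly the "normal-form argument" you anticipate.
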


\begin{proof}

We first establish the equation 
\begin{align*}  
  G 
  &= 1 + zqG + z\qbar G + z^2 \left[ \Phi_{N,M} \circ L \right] G + 
  z^2 \left[ \Phi_{M,N} \circ K \right]G
\end{align*}
Factor words in $\mathcal{H}$ recursively by
considering the first letter in any word $w \in \mathcal{H}$.
This gives five cases which will correspond to the five terms on the RHS of the 
above equation:
\begin{itemize}
\item $w$ is the empty word. This is counted by the 1 in the expression for $G$.
\item The first letter is $a$. Then $w \equiv a v$  for some $v \in 
\mathcal{H}$. If $w$ has length $n$ and $a$-exponent $k$, then $v$ has 
length $n-1$ and $a$-exponent $k-1$. Summing over the contribution of all 
possible such $w$ gives $zq G(z;q)$ at the level of generating functions.

\item The first letter is $\abar$. Then $w \equiv \abar
v$ for some $v \in \mathcal{H}$.
At the level of generating functions this gives
$z\qbar G(z;q)$, since the number of words counted by $g_{n,k}$ of this form is the number of words counted in $\mathcal H$ of length $n-1$ and $a$-exponent $k+1$.

\item The first letter is $b$. Write  $w \equiv u v$ where $u$ is the shortest
prefix of $w$ so that $u \in \suba{}$. Thus, $u \equiv b u' \bbar$ for some $u' \in
\suba{N}$ and so $u \in \suba{M}$. The minimality of $u$ ensures $u' \in 
\mathcal{L}$.

It follows that words $w\equiv bu'b^{-1}v$ of length $n$ and $a$-exponent $k$ are counted by
$$\sum_{i=0}^{n-2}\sum_j \left(l_{i,jN}\right)\left( g_{n-i-2, k-jM}\right)$$
where $l_{i,jN}$ counts the words $u'$ of length $i$ and $a$-exponent $jN$,
 and $g_{n-i-2, k-jM}$ counts the words $v$ of length $n-i-2$ and $a$-exponent $k-jM$.

So the term $$\left(\sum_{i=0}^{n-2}\sum_j \left(l_{i,jN}\right)\left( g_{n-i-2, k-jM}\right)\right)z^nq^k = z^2\left(\sum_{i=0}^{n-2}\sum_j \left(l_{i,jN}\right)\left( g_{n-i-2, k-jM}\right)\right)z^{n-2}q^k$$
is the contribution to $g_{n,k}z^nq^k$ from words starting with $b$, and summing 
over all such words gives their contribution to $G(z;q)$.

We claim that 
\begin{align*}
\sum_{n,k}\left(\sum_{i=0}^{n-2}\sum_j \left(l_{i,jN}\right)\left( g_{n-i-2, 
k-jM}\right)\right)z^{n-2}q^k=\left[\Phi_{N,M} \circ L(z;q) \right]G(z;q)
\end{align*}
Let us start by computing the action of the $\Phi$ operator
\begin{align*}
\Phi_{N,M} \circ L(z;q) 
  &= \Phi_{N,M} \circ \left( \sum_r z^r \sum_j l_{r,i} q^{i} \right) \\
  &= \sum_r z^r \cdot \Phi_{N,M} \circ \left(  \sum_i l_{r,i} q^{i} \right) 
  &= \sum_r z^r \cdot \Phi_{N,M} \circ \left(  \sum_j \sum_d l_{r,Nj+d} 
q^{Nj+d} \right) \\
  &= \sum_r z^r \sum_j l_{r,Mj} q^{Nj} 
\end{align*}
Now let us expand the right hand side.
\begin{align*}
\left[\Phi_{N,M} \circ L(z;q) \right]  G(z;q)
&= \left( \sum_r z^r \sum_j l_{r,jN} q^{jM} \right) 
 \cdot \left( \sum_i z^i \sum_e g_{i,e}q^{e} \right)\\
&= \sum_{r,i} z^{r+i} \left( \sum_{j} q^{jM} \cdot l_{r,jN} \right)  \cdot 
\left( \sum_{e} q^e \cdot g_{i,e} \right) \\
&= \sum_{r,i} z^{r+i} \sum_{j,e} q^{jM+e} \cdot l_{r,jN} \cdot g_{i,e}
\intertext{Set $r=\alpha, i=n-2-\alpha$ and $e=k-jN$}
&= \sum_{n,\alpha} z^{n-2} \sum_{j,k} q^{k} \cdot l_{\alpha,jN} \cdot 
g_{n-2-\alpha,k-jN}
%
\end{align*}
which is the required form.


\item The first letter is $\bbar$. Factor $w \equiv u v$ where $u$ is the
shortest word so that $u \in \suba{}$. As per the previous case, $u \equiv
\bbar u' b$ for some $u' \in \suba{M}$ with $u' \in \mathcal{K}$, and  so $u \in \suba{N}$. 
It follows (by similar reasoning) that the contribution of these words to 
$G(z;q)$ is $z^2 \left( \Phi_{M,N} \circ K(z;q)\right) \cdot G(z,q)$.
\end{itemize}

We now prove the equation satisfied by $L(z,q)$:
\begin{align*}
 L &=  1 + z(q+\qbar)L 
  + z^2 L\cdot \left[  \Phi_{N,M}\circ L \right]
  + z^2 \left(L - \left[ \Phi_{N,N} \circ L \right] \right)
  \cdot \left[ \Phi_{M,N} \circ K \right] 
\end{align*}
Consider an element $w \in \mathcal{L}$, and we note that $\mathcal{L}$
(and $\mathcal{K}$) is closed under appending the generators $a$ and
$\abar$, but not prepending. For this reason we will factor words in $\mathcal{L}$ recursively by considering
the last letter of $w$, which again gives use five cases.
\begin{itemize}
\item $w$ is the empty word, accounting for the 1 in the expression.
\item The last letter is $a$ or $\abar$. Then $w \equiv v a$ or $w \equiv v 
\abar$ for some $v \in \mathcal{L}$, increasing the length by $1$ and altering 
the $a$-exponent by $\pm1$. This yields the term $z(q+\qbar) L(z;q)$.

\item The last letter is $\bbar$. Factor $w \equiv u v$ where $u$ is the
longest subword such that $u \in \suba{}$ and $v$ is non-empty. This forces $v 
\equiv b v' \bbar$ with the restriction that $v' \in \mathcal{L}$. Since both 
$v,v' \in \mathcal{L}$ we must have $v' \in \suba{N}$ and $v \in \suba{M}$. By 
similar arguments to those used above for $G$, the contribution of all such 
words is $z^2 L(z;q) \cdot \Phi_{N,M} \circ L(z;q)$.

\item The last letter is $b$. Factor $w \equiv u v$ where $u$ is the longest
subword such that $u \in \suba{}$ and $v$ is non-empty. This forces $v \equiv 
\bbar{} v' b$ with the restriction that $v' \in \mathcal{K}$. Lemma~\ref{lem 
Lchar} implies the subword $u\not\in \suba{N}$.

The generating function for $\left\{u \in \mathcal{L} \mid u \not\in
\suba{N}\right\}$ is given by $(L - \Phi_{N,N}\circ L)$, and so this last case
gives  $z^2 (L(z;q) - \Phi_{N,N} \circ L(z;q) ) \cdot \Phi_{M,N} \circ K(z;q)$. 
Again the details are similar to those used in the above argument for $G$.
\end{itemize}
Putting all of these cases together and rearranging gives the result.
The equation for $\mathcal{K}$ follows a similar argument.
\end{proof}

\section{Solution for $\mathrm{BS}(N,N)$}
\label{sec:NN}

Let $G(z;q)$ be the function defined above for the group $\mathrm{BS}(N,N)$. Recall that our main interest is the coefficient of $q^0$ in $G(z;q)$ rather than the full generating function itself, so
let $[q^0]G(z;q)=\sum_{n} g_{n,0} z^n$ which by Lemma \ref{lem:zeroterms} is the cogrowth series for $\mathrm{BS}(N,N)$.
%


\begin{theorem}\label{thm:main}
The generating function $G(z,q)$ is algebraic. Consequently the cogrowth series 
$[q^0]G(z;q)$ is D-finite, and the cogrowth rate is an algebraic number. 
\end{theorem}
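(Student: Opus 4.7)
The plan is to collapse the system of Proposition~\ref{prop:GLKeqns} to a single algebraic equation for the $N$-section $\Lambda := \Phi_{N,N} \circ L$. Lemma~\ref{lem:LequalsK} gives $K = L$, and substituting into the equations for $L$ and $G$ yields
\[
L = \frac{1 - z^2 \Lambda^2}{1 - z(q+\qbar) - 2z^2\Lambda}, \qquad G = \frac{1}{1 - z(q+\qbar) - 2z^2\Lambda}.
\]
Both $L$ and $G$ are rational in $\Lambda,z,q$, so the theorem reduces to showing $\Lambda$ is algebraic over $\mathbb{Q}(z,q)$.

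To obtain an equation for $\Lambda$, apply $\Phi_{N,N}$ to the closed form for $L$. Since $1 - z^2\Lambda^2$ is supported on $q$-powers divisible by $N$, and $\Phi_{N,N}(fg) = f\cdot\Phi_{N,N}(g)$ whenever $f$ is, one gets
\[
\Lambda = (1 - z^2\Lambda^2)\cdot \Phi_{N,N}\!\left(\frac{1}{1 - z(q+\qbar) - 2z^2\Lambda}\right).
\]
Writing $A := 1 - 2z^2\Lambda$, the denominator factors as $-z(q-r)(q-1/r)/q$ where $r,1/r$ are the two roots of $zX^2 - AX + z = 0$, so $r$ is algebraic of degree two over $\mathbb{Q}(z,\Lambda)$. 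Using the roots-of-unity formula $\Phi_{N,N}(h) = N^{-1}\sum_{k=0}^{N-1}h(\zeta_N^k q)$, the invariance of $\Lambda$ under $q \mapsto \zeta_N^k q$ (since $\Lambda$ contains only powers $q^{jN}$), and the product identity
\[
\prod_{k=0}^{N-1}\bigl(A - z\zeta_N^k q - z\zeta_N^{-k}\qbar\bigr) = \frac{z^N(q^N - r^N)(1 - q^N r^N)}{q^N r^N},
\]
one can rewrite the $N$-section as an explicit rational function of $z$, $\Lambda$, $r$, and $Y := q^N + q^{-N}$. Substituting back yields a polynomial relation $P(\Lambda,z,q,r) = 0$; eliminating $r$ against its defining quadratic produces a polynomial equation for $\Lambda$ over $\mathbb{Q}[z,q]$. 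Hence $\Lambda$, and with it $L$ and $G$, are algebraic. The explicit evaluation of this $N$-section is the main obstacle: the $q \leftrightarrow \qbar$ symmetry of $\Lambda$ (which confines it to the single variable $Y$) and the product identity above are what force the sum of $N$ rational functions to collapse to something algebraic in the finitely many variables $z, \Lambda, r, Y$.

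For the remaining conclusions, the cogrowth series $[q^0]G(z;q)$ is the constant term in $q$ of a bivariate algebraic series; since algebraic series are D-finite and D-finite series are closed under single-variable constant-term extraction (equivalently, under taking diagonals \cite{\LipsDiag}), the cogrowth series is D-finite. Its radius of convergence is a root of the leading polynomial coefficient of its defining ODE with $\mathbb{Q}$-coefficients, hence algebraic, and the cogrowth rate is its reciprocal.
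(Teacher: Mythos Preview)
Your proposal is correct and follows essentially the same route as the paper: use Lemma~\ref{lem:LequalsK} to collapse the system, express $L$ and $G$ rationally in $\Lambda=\Phi_{N,N}\circ L$, extract an algebraic equation for $\Lambda$ via the roots-of-unity averaging $\Phi_{N,N}(h)=N^{-1}\sum_k h(\zeta_N^k q)$ together with the invariance $\Lambda(\zeta_N^k q)=\Lambda(q)$, and then invoke Lipshitz's diagonal theorem and the polynomial singularity structure of D-finite ODEs. The one substantive difference is in how you finish the key step: after the averaging you obtain
\[
N\Lambda \;=\; \sum_{k=0}^{N-1}\frac{1-z^2\Lambda^2}{\,1 - z(\zeta_N^k q + \zeta_N^{-k}\qbar) - 2z^2\Lambda\,},
\]
and the paper simply clears the $N$ denominators to get a polynomial equation in $\Lambda$ of degree at most $N+1$ directly over $\mathbb{Q}(z,q)$, with no auxiliary variable needed. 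Your detour through the quadratic root $r$, the product identity, and a subsequent resultant elimination is valid but unnecessary, and it obscures the clean degree bound $N+1$ that the direct clearing gives.
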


\begin{proof}
We claim that the generating function $G(z;q)$ is algebraic, satisfying a 
polynomial equation (of degree $N+1$). We prove this claim below and first 
show that the remainder of the theorem follows from this claim.

Assume $G(z;q)$ is algebraic; since all algebraic functions are D-finite 
\cite{\Stanley}, it is also D-finite. The series $G(zq;q^{-1})$ is also 
algebraic and D-finite. The cogrowth series can then be expressed as the 
diagonal of this series, where the diagonal of a power series in $z$ and $q$ is 
defined to be
\begin{align*}
  \diag_{z,q} \circ \sum_{n,k} f_{n,k} z^n q^k &= \sum_n f_{n,n} z^n.
\end{align*}
In particular
\begin{align*}
  \diag_{z,q} \circ G(zq;q^{-1}) &= \diag_{z,q} \circ \sum_{n,k} z^n q^{n-k} 
g_{n,k} 
= \sum_n z^n g_{n,0}. 
\end{align*}
A result of Lipshitz~\cite{\LipsDiag} states that the diagonal of a 
D-finite series is itself D-finite. Thus $\diag_{z,q} G(zq;q^{-1}) = [q^0] 
G(z;q)$ is D-finite.

By definition, any D-finite generating function, $f(z)$, satisfies a linear differential equation 
with polynomial coefficients which can be written as
\begin{align*}
  f^{(n)}(z) + \sum_{j=0}^{n-1} \frac{p_j(z)}{p_n(z)} f^{(j)}(z) &= 0
\end{align*}
where the $p_i(z)$ are polynomial.

The singularities of $f(z)$  correspond to the 
singularities of the coefficients of the DE (see Chapter~1 of \cite{\Poole}).
Thus the singularities of the 
solution and its radius of convergence are all algebraic numbers. The 
exponential growth rate of the coefficients of the expansion of $f(z)$ about 
zero is equal to the reciprocal of the radius of convergence (see, for example, 
Theorem IV.7 in \cite{\Flajolet}). Thus the cogrowth rate is an algebraic 
number.

So to establish the theorem we need to show that $G(z;q)$ satisfies an 
polynomial equation.

Since $N=M$ we have  $K(z;q) = L(z;q)$ by Lemma  \ref{lem:LequalsK}, and so the 
equations in Proposition~\ref{prop:GLKeqns} simplify considerably to
 \begin{align*}
 L &=  1 + z(q+\qbar)L + 2 z^2 L\cdot
  \left[  \Phi_{N,N}\circ L \right] 
   - z^2 \left[ \Phi_{N,N} \circ L \right]^2, \\
  G &= 1 + z(q+\qbar)G 
  + 2 z^2 G \cdot \left[ \Phi_{N,N} \circ L \right].
\end{align*}
To simplify notation in what follows, write
\begin{align*}
L_0(z;q) = \Phi_{N,N} \circ L(z;q)=\sum_{n,d} l_{n,dN}z^nq^{dN}.
\end{align*}
So the equations for $L$ and $G$ may be written as 
\begin{align*}
  L(z;q) &= \frac{1 - z^2 L_0(z;q)^2}{1 - z(q+\qbar) -2z^2 L_0(z;q)}, \\
  G(z;q) &= \frac{1}{1-z(q+\qbar)-2z^2 L_0(z;q)}.
\end{align*}
To complete the proof it suffices to show that $L_0(z;q)$ satisfies an 
algebraic equation of degree $(N+1)$.

Let $\omega = e^{2\pi i/N}$ be an $N^{th}$ root of unity. We claim that
\begin{align*}
    L_0(z;\omega^j q) &= L_0(z;q) &\text{and} \\
    N L_0(z;q) &= \sum_{j=0}^{N-1} L(z; \omega^j q). 
\end{align*}
To see the first, by definition  $L_0(z;\omega^j q) =\sum_{n,d} l_{n,dN}z^n(\omega^j q)^{dN}$ and $\omega^N=1$.
To derive the second consider 
the sum $\sum_{j=0}^{N-1} \omega^{kj}$, with $k$ in $\mathbb{Z}$. If $k = dN, 
d\in \mathbb{Z}$ then
\begin{align*}
  \sum_{j=0}^{N-1} \omega^{k} &= 
  \sum_{j=0}^{N-1} \omega^{Nd}   =\sum_{j=0}^{N-1}1 = N.
\end{align*}
On the other hand, if $k = Nd+l$ with $0<l<N$:
\begin{align*}
  \sum_{j=0}^{N-1} \omega^{kj} &= 
  \sum_{j=0}^{N-1} \left({\omega^N}\right)^{dj}\cdot \omega^{jl} = 
\sum_{j=0}^{N-1} \omega^{jl} \\
  & = 1 + \omega^l + \dots +   \omega^{(N-1) l} = \frac{1-\omega^{Nl} 
}{1-\omega^l} = 0
\end{align*}
since $\omega^l \neq 1$ and $\omega^{Nl}=1$. Returning to the second claim
\begin{align*}
  \sum_{j=0}^{N-1} L(z; \omega^j q) 
  &= \sum_{j=0}^{N-1} \sum_{n,k} g_{n,k} z^n q^k \omega^{kj} \\
  &= \sum_{n,k} g_{n,k} z^n q^k \sum_{j=0}^{N-1} \omega^{kj} \\
  &= \sum_{n,d} g_{n,dN} z^n q^{dN} \cdot N = NL_0(z;q)
\end{align*}
as required.

Combining the above expression for $L(z;q)$ in terms of $L_0(z;q)$ with that 
expressing $L_0(z;q)$ in terms of $L(z;q\omega^j)$ we obtain
\begin{align}
  \label{eqn nl0}
    N L_0(z;q) &= \sum_{j=0}^{N-1}
\frac{1 - z^2 L_0(z;q)^2}{1 - z(q\omega^j+\qbar\omega^{-j}) -2z^2  L_0(z;q)} 
\end{align}
where we have used the fact that $L_0(z;q\omega^j) = L_0(z;q)$. Clearing the 
denominator of this expression we have
\begin{multline*}
  N L_0(z;q) \cdot \prod_{j=0}^{N-1} 
\left( 1 - z(q\omega^j+\qbar\omega^{-j})-2z^2  L_0(z;q) \right)
\\= \sum_{j=0}^{N-1} \left( 1 - z^2 L_0(z;q)^2 \right) 
\cdot \prod_{\substack{0\leq l\leq N \\ l\neq j}}
\left( 1 - z(q\omega^j+\qbar\omega^{-j})-2z^2  L_0(z;q) \right).
\end{multline*}
Thus $L_0(z;q)$ satsifies an algebraic equation of degree at most $N+1$.

%

Now rewrite $L_0(z;q)$ in terms of $G(z;q)$:
\begin{align}
  \label{eqn l0g}
  L_0(z;q) &= \frac{1}{2z^2} - 
\frac{q+\qbar}{2z}- \frac{1}{2z^2 G(z;q)}. 
\end{align}
Substituting this into the above equation for $L_0(z;q)$ and clearing 
denominators yields an equation for $G(z;q)$ of degree at most 
$N+1$.
 \end{proof}

We remark that for small values of $N$, the equation satisfied for $G$ is 
relatively easy to write down (with the aid of a computer algebra system).
Set $Q = q + \qbar$, then for $N=2,3,4,5$  the generating function $G(z;q)$ 
satisfies the following $(N+1)$-degree polynomial equations
\begin{align}
\label{eqn bs22 gzq}
1+3zQG - (1-4z^2-z^2Q^2)G^2 - zQ(1-zQ-2z)(1-zQ+2z)G^3 & = 0,
\end{align}
\begin{multline}
1
+4zQG
+(6Q^2z^2-z^2-1)G^2
+2z(Qz+1)(Q^2z-Q+2z)G^3\\
+z^2(1-Q)(1+Q)(Qz+2z-1)(Qz-2z-1)G^4 = 0,
\end{multline}
\begin{multline}
1
+5GQz
+(10Q^2z^2-2z^2-1)G^2
+z(10Q^3z^2-6Qz^2-3Q+4z)G^3\\
+z^2(3Q^4z^2+2Q^2z^2-3Q^2+8Qz-8z^2+2)G^4\\
-z^3Q(Q^2-2)(Qz+2z-1)(Qz-2z-1)G^5 =0
\end{multline}
and 
\begin{multline}
1 +6 Qz G  +(15Q^2z^2-3z^2-1)G^2 \\
+4(5Q^3z^2-3Qz^2-Q+z)z G^3 
+3(5Q^4z^2-6 Q^2z^2-2Q^2+4Qz-z^2+1)z^2 G^4\\
+2(2Q^5z^2-Q^3z^2-2Q^3+6Q^2z-8Qz^2+3Q-4z)z^3 G^5\\
-(Q^2+Q-1)(Qz+2z-1)(Qz-2z-1)(Q^2-Q-1)z^4 G^6 = 0
\end{multline}

respectively.

\section{Cogrowth rates}\label{sec:rates}
Theorem \ref{thm:main} proves that the cogrowth rates are algebraic numbers 
but does not give an explicit method for computing them. Since for $N\geq 2$ the groups 
$\mathrm{BS}(N,N)$ are non-amenable, we know by Grigorchuk and 
Cohen \cite{\Grig,\Cohen} these numbers are strictly less than $4$. The 
following theorem enables us to compute the cogrowth rates explicitly.

Recall that the reciprocal of the radius of convergence of a power series 
$\sum_n a_nz^n$ is $\limsup a_n^{1/n}$.
\begin{theorem}
\label{thm gz1 gz0}
For $\mathrm{BS}(N,N)$, the generating functions $G(z;1)$ and $[q^0]G(z;q)$ have 
the same radius of convergence. Hence the cogrowth rate is the reciprocal of 
the radius of convergence of $G(z,1)$.
\end{theorem}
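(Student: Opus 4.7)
The plan is to prove $R_0 = R_1$, where $R_0$ and $R_1$ are the radii of convergence of $[q^0]G(z;q)=\sum_n c(n)z^n$ and $G(z;1)=\sum_n h_n z^n$ respectively, with $h_n=\sum_k g_{n,k}$. By the Cauchy--Hadamard formula this is equivalent to showing $\limsup h_n^{1/n} = \limsup c(n)^{1/n}$. One direction is immediate: since $c(n)=g_{n,0}\leq \sum_k g_{n,k}=h_n$, we have $\limsup c(n)^{1/n} \leq \limsup h_n^{1/n}$, hence $R_1 \leq R_0$. The content of the theorem is therefore the reverse inequality.

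For that I would use Cauchy--Schwarz together with a concatenation trick. Observe first that any word of length $n$ has $a$-exponent $k$ with $|k|\leq n$ (each generator changes the exponent by at most one), so the sum defining $h_n$ has at most $2n+1$ nonzero terms. Cauchy--Schwarz then yields
\[
h_n^2 \;=\; \Big(\sum_{k} g_{n,k}\Big)^{2} \;\leq\; (2n+1)\sum_{k} g_{n,k}^{2}.
\]
The key combinatorial step is the bound $\sum_k g_{n,k}^{2} \leq c(2n)$: for each $k$ and each ordered pair $(w_1,w_2)$ of words of length $n$ both representing $a^k$, the concatenation $w_1 w_2^{-1}$ is a trivial word of length $2n$, and the map $(w_1,w_2)\mapsto w_1 w_2^{-1}$ is injective (the split is at the midpoint). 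Putting these together gives $h_n \leq \sqrt{(2n+1)\,c(2n)}$, whence
\[
\limsup_n h_n^{1/n} \;\leq\; \limsup_n \big((2n+1)\,c(2n)\big)^{1/(2n)} \;=\; \limsup_n c(2n)^{1/(2n)} \;\leq\; \limsup_n c(n)^{1/n},
\]
which is the desired reverse inequality and hence $R_1 = R_0$.

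The second assertion follows immediately: the cogrowth rate is by definition $\limsup c(n)^{1/n} = 1/R_0 = 1/R_1$, so it equals the reciprocal of the radius of convergence of $G(z;1)$.

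I do not anticipate a substantive obstacle. The most delicate point is verifying the injectivity of $(w_1,w_2)\mapsto w_1 w_2^{-1}$ and the triviality of its image — both follow from the fact that the two factors have a common representative in $\langle a\rangle$, so the claim is elementary. Note that this argument is in fact group-theoretically generic and uses neither the hypothesis $N=M$ nor the algebraicity of $G(z;q)$ established in Theorem~\ref{thm:main}; it works for any $\mathrm{BS}(N,M)$ presentation on the chosen generators, and indeed for any finitely generated group in which the subgroup is cyclic.
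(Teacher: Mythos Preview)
Your proof of the theorem itself is correct and follows essentially the same strategy as the paper's: bound the number of nonzero $g_{n,k}$ by $2n+1$, then use concatenation to compare with $g_{2n,0}$. The paper phrases this as a ``most popular'' argument (pick $k^*$ maximising $g_{n,k}$ and concatenate a word of exponent $k^*$ with one of exponent $-k^*$, using $g_{n,k}=g_{n,-k}$), while you use Cauchy--Schwarz together with the injection $(w_1,w_2)\mapsto w_1 w_2^{-1}$; the two are equivalent in strength and spirit.

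Your closing remark, however, is wrong. The step ``each generator changes the exponent by at most one'' is precisely where $N=M$ enters: it amounts to the existence of a homomorphism $\mathrm{BS}(N,N)\to\mathbb{Z}$ with $a\mapsto 1$, $b\mapsto 0$, and no such homomorphism exists when $N\neq M$. For $N\neq M$ the $a$-exponent can be exponentially large in the word length --- e.g.\ in $\mathrm{BS}(2,4)$ the word $b^{-i}a^{2}b^{i}$ of length $2i+2$ represents $a^{2^{i+1}}$ --- so the factor $(2n+1)$ in your Cauchy--Schwarz bound would be replaced by something exponential in $n$, and the inequality $\limsup h_n^{1/n}\leq\limsup c(n)^{1/n}$ no longer follows. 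The paper makes exactly this observation in the remark immediately following its proof. So the argument is \emph{not} group-theoretically generic; it uses $N=M$ at the very step you labelled innocuous.
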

The proof of the above theorem hinges on the following observation
\begin{lemma}
  \label{lem gnk 0}
 For $\mathrm{BS}(N,N)$, $g_{n,k}=0$ for $|k|>n$.
\end{lemma}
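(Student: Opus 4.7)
The plan is to exhibit a group homomorphism from $\mathrm{BS}(N,N)$ to $\mathbb{Z}$ under which the image of $a^k$ is $k$, and then bound the image of any length-$n$ word trivially by $n$.

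First I would define $\varphi\colon \mathrm{BS}(N,N) \to \mathbb{Z}$ on generators by $\varphi(a) = 1$ and $\varphi(b) = 0$. This extends to a homomorphism because the defining relator $a^N b a^{-N} b^{-1}$ maps to $N + 0 - N - 0 = 0$, so the relation is satisfied in $\mathbb{Z}$. Equivalently, $\varphi(w)$ is just the $a$-exponent sum of $w$, that is, (number of $a$'s in $w$) minus (number of $\abar$'s in $w$).

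Next, suppose $w$ is a word of length $n$ counted by $g_{n,k}$, so $w$ has normal form $P a^k$ with $P$ the empty word by Lemma~\ref{lem:zeroterms} applied in spirit (more precisely, $w$ represents $a^k$ in the group by definition of $g_{n,k}$ and the fact that its normal form has $a$-exponent $k$ and no $b^{\pm 1}$ symbols, since it lies in $\mathcal{H}$ with prefix zero). Applying $\varphi$ gives
\begin{align*}
k = \varphi(a^k) = \varphi(w) = (\text{number of $a$'s in $w$}) - (\text{number of $\abar$'s in $w$}).
\end{align*}
Since $w$ has exactly $n$ letters, the right-hand side lies in the interval $[-n,n]$, giving $|k| \le n$. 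Hence $g_{n,k}=0$ whenever $|k| > n$.

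There is no real obstacle here; the only subtle point is making sure the defining relation is killed by $\varphi$, which is immediate because both sides have equal $a$-exponent sum and equal $b$-exponent sum (this is special to the case $N=M$, as expected since the lemma is stated for $\mathrm{BS}(N,N)$).
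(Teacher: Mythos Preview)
Your argument is correct and takes a genuinely different route from the paper. The paper rewrites $w$ using the relations of $\mathrm{BS}(N,N)$ --- free cancellation and commuting $a^{\pm N}$ past $b^{\pm 1}$ --- noting that neither move increases length, and then invokes Britton's lemma to conclude that the resulting reduced word is simply $a^k$ with $|k|\le n$. Your approach bypasses all of this by observing that the $a$-exponent-sum map $\varphi$ descends to a homomorphism $\mathrm{BS}(N,N)\to\mathbb{Z}$ (this is exactly where $N=M$ is used), and then bounding $|\varphi(w)|$ by the word length. Your argument is shorter and more conceptual; the paper's has the minor advantage of staying entirely within the normal-form machinery already set up, and of making transparent the step that fails when $N\neq M$ (the rewriting rules change length).

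One small point of presentation: your justification that $w$ actually represents $a^k$ (not merely some $a^m$ with normal form $Pa^k$) is a little tangled. The clean statement is that any $w\in\mathcal{H}$ equals $a^m$ for some $m$, and since $a^m$ is already in normal form, uniqueness of normal forms (equivalently, the Britton argument in the proof of Lemma~\ref{lem:zeroterms}) forces $P$ empty and $k=m$. You have the right idea in your parenthetical, but invoking Lemma~\ref{lem:zeroterms} ``in spirit'' is unnecessary once you say this directly.
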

\begin{proof}
Let $w$ be a word of length $n$ and $a$-exponent $k$ in $\mathcal H$. If $w$ contains a canceling pair of generators then removing them does not increase length, and replacing $a^{\pm N}b^{\pm 1}$ with $b^{\pm 1}a^{\pm N}$ similarly does not increase length, so applying these moves to $w$ we obtain a freely reduced word of length at most $n$ of the form $a^{\eta_1}b^{\epsilon_1}\dots a^{\eta_s}b^{\epsilon_s}a^r$ where $\eta_i,\epsilon_i,r,s\in\mathbb Z, |\eta_i|<N, \epsilon_i=\pm 1$ and $s\geq 0$. Since $w\in H$ we have $w=a^p$ for some $p\in \mathbb Z$, so 
$a^{\eta_1}b^{\epsilon_1}\dots a^{\eta_s}b^{\epsilon_s}a^{r}a^{-p}=1$, and Britton's lemma implies that $s=0$ and $p=k\leq n$.
\end{proof}

\begin{proof}[Proof of Theorem~\ref{thm gz1 gz0}]
Let $g_n$ denote the number of words in $\mathcal H$ of length $n$, that is, 
$g_n=\sum_k g_{n,k}$, and $G(z;1) = \sum_{n=0}^\infty g_n z^n$. Write $\limsup 
g_n^{1/n} = \mu_{all}$ and $\limsup g_{n,0}^{1/n} = \mu_0$. Since $g_{n,k} \geq 0$ we 
have that $g_n \geq g_{n,0}$ and so $\mu_{all} \geq \mu_0$.

To prove the reverse inequality we proceed as follows. For a fixed $n$, by 
Lemma~\ref{lem gnk 0}, $g_{n,k}$ is positive for at most $2n+1$ values of $k$. 
Since $g_{n,k}$ is positive for only finitely many $k$, there is some integer 
$k^*$ (depending on $n$) so that $g_{n,k^*} \geq g_{n,k}$ for all $k$. Then we 
have 
\begin{align}
  \label{eqn gnkstar}
  g_{n,k^*} \leq g_n \leq (2n+1) g_{n,k^*}
\end{align}
and hence $\limsup g_{n,k^*}^{1/n} = \mu_{all}$. 

Keeping $n$ fixed, consider a word that contributes to $g_{n,k^*}$ and another
that contributes to $g_{n,-k^*}$. Concatenating them together gives a word that
contributes to $g_{2n,0}$. 
It follows that \begin{align}
  g_{n,k^*}g_{n,-k^*}  & \leq g_{2n,0}.
\end{align}
Since 
 $g_{n,k^*}=g_{n,-k^*}$ by 
Lemma~\ref{lem:gnkneg} this gives
\begin{align}
  g_{n,k^*}^2 & \leq g_{2n,0}
\end{align}

 Raising both sides to the power $\nicefrac{1}{2n}$ and taking $\limsup_{n\rightarrow \infty}$ gives
\begin{align}
 \limsup_{n\rightarrow \infty} g_{n,k^*}^{1/n} &\leq \limsup_{n\rightarrow \infty} \left(g_{2n,0}\right)^{1/2n}
\end{align}
and so  $\mu_{all} \leq \mu_0$.
\end{proof}

The preceding proof is known as a ``most popular'' argument  in statistical 
mechanics, where it is commonly used to prove equalities of free-energies
(see \cite{\Hammersley} for example). In our case a most popular value is 
$k^*$, since this is a value that maximises $g_{n,k}$.
Numerical experiments show that $k^* = 0$, with a distribution tightly peaked 
around 0. This tells us experimentally that the most numerous words evaluating
to an element in $\suba{}$ are those with zero $a$-exponent.

For fixed $N$, we can use a computer algebra system to combine 
equations~\Ref{eqn nl0} and~\Ref{eqn l0g} in the proof of 
Theorem~\ref{thm:main} to obtain explicit polynomial equations satisfied by 
$G(z,q)$. We listed these for $N=2,3,4,5$ at the end of the previous section. 
Substituting $q=1$ (and $Q=q+\qbar=2$) gives an explicit polynomial equation 
for $G(z,1)$. One can then find its radius of convergence by solving the 
discriminant of the polynomial --- see for example \cite{\KW}. Alternatively one 
can convert the algebraic equation satisfied by $G(z,1)$ to a differential 
equation and then a linear recurrence satisfied by $g_{n}$ (using say, the 
\texttt{gfun} package \cite{gfun} for the \texttt{Maple} computer algebra 
system). The asymptotics of $g_n$ can then be determined using the methods 
described in \cite{\ZW}.


 These computations become extremely slow for even 
modest values of $N$. We give the results for $1\leq N \leq 10$ 
in Table~\ref{tab_mu_NN}.

Some simple numerical analysis of these numbers suggests that the
cogrowth rate approaches $\sqrt{12}$ exponentially with increasing $N$. This
finding agrees with work of Guyot and Stalder \cite{\GStald}, 
who examined the limit of the {\em marked} groups $\mathrm{BS}(M,N)$ as $M,N\rightarrow
\infty$, and found that the groups tend towards the free group on two letters,
which has an asymptotic cogrowth rate of  $\sqrt{12}$.

\begin{table}
\begin{center}
\begin{tabular}{|c|c||c|}
\hline
$N$ & $\mu$ & $\lambda$ \\
\hline
1& 4 & 3 \\
2& 3.792765039 & 2.668565568 \\
3& 3.647639445 & 2.395062561 \\
4& 3.569497357 & 2.215245886 \\
5& 3.525816111 & 2.091305394 \\
6 & 3.500607636 & 2.002421757\\
7 &  3.485775158 & 1.936941986 \\
8 & 3.476962757 & 1.887871818 \\
9 & 3.471710431 & 1.850717434 \\
10 & 3.468586539 & 1.822458708 \\
\hline
\end{tabular}
\end{center}
\caption{The cogrowth rate $\mu$ in $\mathrm{BS}(N,N)$ and the
corresponding growth rate of freely reduced trivial words $\lambda$. Note
that $\mu$ and $\lambda$ are related by $\mu = \lambda + 3/\lambda$ (see 
Lemma~\ref{lemma woe}), and that the cogrowth rate in the
free group on 2 generators is $\sqrt{12} = 3.464101615$.}
\label{tab_mu_NN}
\end{table}

\begin{remark}
  For $\mathrm{BS}(N,M)$ with $N \neq M$, $g_{n,k}$ can be nonzero for $|k|>n$ (for example in $BS(2,4)$ 
we have $t^ia^2t^{-i}=a^{2^{i+1}}$).
So while the left-hand inequality in equation~\Ref{eqn 
gnkstar} still holds, the factor of $(2n+1)$ in the right-hand inequality would 
be replaced with a term that grows exponentially with $n$, which means the   
proof of Theorem~\ref{thm gz1 gz0}
breaks down.
 We computed $g_{n,k}$ for small $n$ for several values of $N,M$ 
and while the number of $k$-values for which $g_{n,k}>0$ grows 
exponentially with $n$,
we observed that the 
distribution of $g_{n,k}$ is again very tightly peaked around $k=0$. This suggests 
that $g_{n,0}$ is the dominant contribution to $g_n$ and thus it may still be 
the case that $G(z,1)$ and $[q^0]G(z,q)$ have the same radii of convergence.
\end{remark}

\section{Discussion of differential equations satisfied by cogrowth series}
\label{sec:numerical}
Our proof of Theorem~\ref{thm:main} guarantees the existence 
of differential equations satisfied by the cogrowth series. It does, however, 
give a recipe for producing them. In this section we use a recently developed 
algorithm due to Chen, Kauers and Singer \cite{\Kauers} to obtain explicit 
differential equations for small values of $N$.

Applying the algorithms described in \cite{\Kauers} to the generating function
$G(z;q)$ for $\mathrm{BS}(2,2)$ we found a $6^\mathrm{th}$ order linear 
differential equation satisfied by $[q^0]G(z;q)$. The polynomial coefficients 
of this equation have degrees up to 47 and so we have not listed it 
here\footnote{The differential equation for the cogrowth of $\mathrm{BS}(N,N)$ 
for $1 \leq N \leq 10$ can be found at 
\texttt{http://www.math.ubc.ca/$\sim$andrewr/pub$\_$list.html}}. 
Again applying the same methods, we found an ODE of order 8 with coefficients of 
degree up to 105 for $\mathrm{BS}(3,3)$ and for $\mathrm{BS}(4,4)$ it is order 
10 with coefficients of degree up to 154.  We thank Manuel Kauers for his 
assistance with these computations.

By studying these differential equations we can determine the asymptotic 
behaviour of $g_{n,0}$ in more detail and demonstrate that the cogrowth series 
is not algebraic.
\begin{prop}
  The coefficients of the cogrowth series grow as
  \begin{align*}
    g_{n,0} &= \binom{n}{n/2}^2 \sim \frac{2}{\pi n} \cdot 4^n & N=1 \\
    g_{n,0} &\sim \frac{A_N}{n^2} \cdot \mu^n & N=2,3,4
  \end{align*}
  for even $n$ where $A_N$ is some real number. As a consequence the 
cogrowth series for $\mathrm{BS}(N,N)$ is not algebraic for $N=1,2,3,4$.
\end{prop}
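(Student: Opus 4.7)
The plan is to handle the two claims about asymptotics separately and then derive non-algebraicity from them via a standard classification theorem on coefficients of algebraic series.

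For $N=1$, the exact formula $g_{n,0}=\binom{n}{n/2}^2$ is already established in Section~\ref{sec:ZZ}, and a direct application of Stirling's formula yields the stated asymptotics $g_{n,0}\sim \frac{2}{\pi n} 4^n$, as was essentially done at the end of that section. So the work is entirely on the $N\in\{2,3,4\}$ cases.

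For $N=2,3,4$, I would begin from the explicit linear ODEs satisfied by $[q^0]G(z;q)$ that are produced by the Chen--Kauers--Singer creative-telescoping algorithm \cite{\Kauers} (the ones of orders $6,8,10$ referred to in the previous paragraph, available from the authors' website). The standard procedure, which is the content of \cite{\ZW}, is: (i) identify the singularities of the ODE as the roots of the leading coefficient polynomial $p_n(z)$, cross-reference with the radius of convergence $\mu^{-1}$ computed in Section~\ref{sec:rates}, and thereby isolate the dominant singularity $\rho_N = 1/\mu$; (ii) compute the indicial equation of the ODE at $z=\rho_N$ to obtain the possible local exponents $\alpha$ governing the Frobenius-type expansions $(1-z/\rho_N)^\alpha$; (iii) match the formal solutions against enough initial coefficients of $g_{n,0}$ (computed directly from the recurrence, or equivalently from the defining polynomial of $G(z;1)$ by series reversion) to fix the connection constants. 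Then transfer theorems (e.g.\ Theorem~VI.4 of \cite{\Flajolet}) translate the local behaviour at $\rho_N$ into $g_{n,0}\sim A_N\mu^n n^{-2}$, once one has verified that the dominant exponent is $\alpha=1$ and that no other singularity on $|z|=\rho_N$ contributes at the same order (this last point can be handled by confirming that the only singularities of the ODE on the circle of convergence are $\pm\rho_N$ and checking parity $g_{2n+1,0}=0$, which absorbs the sign).

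The non-algebraicity then follows immediately from Theorem~D of \cite{flajolet1987}: the coefficients of an algebraic power series necessarily satisfy $[z^n]f(z)\sim \sum_i A_i \mu_i^n n^{\gamma_i}$ with each $\gamma_i\in\mathbb{Q}\setminus\{-1,-2,-3,\dots\}$. For $N=1$ the exponent is $-1$ and for $N=2,3,4$ the exponent is $-2$, both of which are forbidden, so in each case $[q^0]G(z;q)$ cannot be algebraic.

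The main obstacle is step~(iii) above for $N=2,3,4$: the ODEs have large order and their coefficient polynomials have degrees up to~$154$, so although the indicial polynomial at $\rho_N$ determines the possible exponents $\alpha$, matching against initial data to verify that the coefficient of $(1-z/\rho_N)^1$ in the connection expansion of $[q^0]G(z;q)$ is nonzero (and that the expected $\alpha=1$ branch, rather than some $\alpha\in\{2,3,\dots\}$ or an analytic branch, dominates) requires a careful symbolic computation with the \texttt{gfun} package. In particular, one must rule out the possibility that the leading contributions from the various local solutions conspire to cancel, which I would do by numerically comparing the predicted asymptotic constant $A_N$ against high-order coefficients $g_{n,0}$ obtained from the P-recurrence. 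Everything downstream of that verification is routine singularity analysis.
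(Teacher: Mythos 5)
Your proposal is correct and follows essentially the same route as the paper: compute the ODEs for $[q^0]G(z;q)$ via the Chen--Kauers--Singer algorithm, extract the asymptotics $A_N\mu^n n^{-2}$ (resp.\ the Stirling computation for $N=1$) by the methods of \cite{\ZW}, and conclude non-algebraicity from Theorem~D of \cite{flajolet1987} since the exponents $-1$ and $-2$ are excluded for algebraic series. The paper's proof is simply a terser statement of the same computation; your elaboration of the singularity-analysis steps and the caveats about connection constants is a faithful expansion of what those cited methods entail.
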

\begin{proof}
We can compute the differential equations satisfied by the cogrowth series 
using the techniques in \cite{\Kauers}. From these equations we then use the 
 methods 
developed in \cite{\ZW} to determine the asymptotics of $g_{n,0}$. The presence 
of the factors of $n^{-1}$ and $n^{-2}$ in the asymptotic forms proves (see 
Theorem~D from \cite{flajolet1987}) that the associated generating function 
cannot be algebraic.
\end{proof}

While there is no theoretical barrier at $N=4$, the derivation of differential 
equations by this method slows quickly with increasing $N$ and we were unable 
to compute the differential equations for $N \geq 5$ rigorously. However, using 
clever guessing techniques
(see \cite{\KauersGuess} for a description) Manuel Kauers also found 
differential equations for $N=5,\dots,10$. For $\mathrm{BS}(5,5)$ the DE is 
order 12 with coefficients of degree up to 301, while that of 
$\mathrm{BS}(10,10)$  is 22nd order with coefficients of degree up to 1153 --- the computation for $N=10$ took about 50 days of computer time,
and when written in text
file is over 6 Mb! 

Using these differential equations we also determine that for $N\leq 10$ we have
\begin{align*}
  g_{n,0} \sim \frac{A_N}{n^2} \mu_N^n.
\end{align*}
In light of this evidence we make the following conjecture:
\begin{conj}
The number of trivial words in $\mathrm{BS}(N,N)$ grows as
\begin{align*}
  g_{n,0} &\sim \frac{A_N}{n^2} \cdot \mu_N^n & \text{for even $n$ }
\end{align*}
for $N \geq 2$ and consequently the cogrowth series is not algebraic.
\end{conj}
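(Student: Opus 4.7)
The plan is to combine the explicit algebraic equation for $G(z;q)$ established in the proof of Theorem~\ref{thm:main} with the asymptotic theory of diagonals of algebraic functions. The conjectured $n^{-2}$ factor signals that, near its dominant singularity $z = 1/\mu_N$, the cogrowth series $[q^0]G(z;q)$ should admit a singular expansion of the form $A_N (1 - \mu_N z)\log(1 - \mu_N z)$ plus analytic terms, since one checks directly that $[z^n]\bigl((1-z)\log(1-z)\bigr) = 1/\bigl(n(n-1)\bigr) \sim 1/n^2$. Once such a local expansion (with $A_N \neq 0$) is established, the non-algebraicity of the cogrowth series follows at once from Theorem~D of \cite{flajolet1987}, since the exponent $\gamma = -2$ lies in the forbidden set $\{-1,-2,-3,\dots\}$.

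The first step is to represent the cogrowth series as a Cauchy integral,
\[
[q^0] G(z;q) = \frac{1}{2\pi i} \oint_{|q|=1} G(z;q)\,\frac{dq}{q},
\]
and then apply methods from the analysis of diagonals of algebraic functions (the Pemantle--Wilson framework, or a direct Puiseux analysis along the lines of \cite{\LipsDiag}). Concretely, one tracks how the roots in $q$ of the polynomial equation of degree $N+1$ for $G(z;q)$ move as $z$ approaches $1/\mu_N$ from below. The dominant singularity of the diagonal is expected to arise from a \emph{pinch}: two branches of $G(z;q)$ coalesce at a point of $|q|=1$ and trap the integration contour. A local analysis of this pinch, combined with the fact that $G(z;q)$ has no singularity at $q=0$ beyond the explicit pole of $1/q$, should produce the anticipated logarithmic factor as one integrates across the coalescence.

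The main obstacle is that the degree of the equation for $G(z;q)$ grows as $N+1$, so any explicit case analysis is infeasible and a uniform argument is essential. I would exploit the symmetry $L_0(z;\omega^j q) = L_0(z;q)$ established in the proof of Theorem~\ref{thm:main} to reduce the analysis to equation~\eqref{eqn nl0} for $L_0(z;q)$, whose structure is more uniform in $N$ and already forms the analytic heart of the algebraic equation for $G$. A subtler difficulty is pinning down the logarithmic contribution precisely: algebraic branches alone cannot produce logarithms, so the factor $\log(1 - \mu_N z)$ must emerge from the contour integration itself. To obtain exactly a simple logarithm (rather than $\log^2$, nothing, or a Puiseux exponent that would change the power of $n$) one must show that exactly two branches of $G(z;q)$ pinch the unit circle at the critical $z$ with quadratic tangency, and that no other pinches on $|q|=1$ cancel the resulting contribution. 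Carrying this out uniformly in $N$, and verifying that the constant $A_N$ produced is nonzero, is where I expect most of the technical effort to lie.
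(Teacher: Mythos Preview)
The statement you are attempting to prove is a \emph{conjecture} in the paper, not a theorem; there is no proof in the paper to compare against. The authors' support for the conjecture is entirely computational: for $N \leq 4$ they rigorously derive the differential equation satisfied by $[q^0]G(z;q)$ via the Chen--Kauers--Singer algorithm, and for $5 \leq N \leq 10$ Kauers guessed the ODEs numerically. From each of these ODEs the asymptotic form $g_{n,0} \sim A_N \mu_N^n / n^2$ is read off using the methods of \cite{\ZW}. The general statement for all $N \geq 2$ is left open.

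Your proposal is therefore not a proof but a sketch of a research programme, and you say as much yourself. The outline via contour integration and a pinch analysis on the algebraic equation for $G(z;q)$ is a reasonable line of attack, and it is genuinely different from the paper's case-by-case computational evidence. However, the real work --- showing uniformly in $N$ that exactly two branches pinch with quadratic tangency on $|q|=1$ at $z=1/\mu_N$, that no other singularities interfere, and that the resulting constant $A_N$ is nonzero --- is precisely the part you flag as unresolved. Until those steps are carried out, this remains a plausible strategy rather than a proof, and the statement remains a conjecture.
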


\section*{Acknowledgements}
The authors thank Manuel Kauers for assistance with establishing the
differential equations described in Section~\ref{sec:numerical}. Similarly we thank
Tony Guttmann for discussions on the analysis of series data. Finally we would
like to thank Sean Cleary and Stu Whittington for many fruitful discussions.

This research was supported by the Australian Research Council (ARC), the
the
Natural Sciences and Engineering Research Council of Canada (NSERC), and the Perimeter Institute for Theoretical
Physics.  Research at Perimeter Institute is supported by the Government
of Canada through Industry Canada and by the Province of Ontario through
the Ministry of Economic Development and Innovation.

\bibliographystyle{plain}
\bibliography{refs-cogrowth}

\end{document}